\pgfplotsset{compat=newest}
\pgfplotsset{plot coordinates/math parser=false}
\newlength\figureheight
\newlength\figurewidth
\newtheorem{theorem}{Theorem}
\title{Nonsymmetric multigrid preconditioning for conjugate gradient methods}
\author{
\alignauthor
Henricus Bouwmeester\\
\affaddr{University of Colorado Denver}\\
\affaddr{P.O. Box 173364}\\
\affaddr{Campus Box 170}\\ 
\affaddr{Denver, CO 80217-3364}\\
\email{\small Henricus.Bouwmeester@ucdenver.edu}
\alignauthor
Andrew Dougherty\\
\affaddr{University of Colorado Denver}\\
\affaddr{P.O. Box 173364}\\
\affaddr{Campus Box 170}\\ 
\affaddr{Denver, CO 80217-3364}\\
\email{\small Andrew.Dougherty@ucdenver.edu}
\alignauthor
Andrew V. Knyazev\\
\affaddr{Mitsubishi Electric Research Laboratories}\\ 
\affaddr{201 Broadway}\\
\affaddr{Cambridge, MA 02145}\\
\email{\small Andrew.Knyazev@merl.com}
}
\begin{document}
\maketitle

\begin{abstract}
We numerically analyze the possibility of turning off post-smoothing
(relaxation) in geometric multigrid when used as a preconditioner in conjugate
gradient linear and eigenvalue solvers for the 3D Laplacian.  The geometric
Semicoarsening Multigrid (SMG) method is provided by the \emph{hypre} parallel
software package.   We solve linear systems using two variants (standard and
flexible) of the preconditioned conjugate gradient (PCG) and preconditioned
steepest descent (PSD) methods. The eigenvalue problems are solved using the
locally optimal block preconditioned conjugate gradient (LOBPCG) method
available in \emph{hypre} through BLOPEX software.  We observe that turning off
the post-smoothing in SMG dramatically slows down the standard PCG-SMG. For
flexible PCG  and LOBPCG, our numerical results show that post-smoothing can be
avoided, resulting in overall acceleration, due to the high costs of smoothing
and relatively insignificant decrease in convergence speed.    We numerically
demonstrate for linear systems that PSD-SMG and flexible PCG-SMG converge
similarly if SMG post-smoothing is off.  We experimentally show that the effect
of acceleration is independent of memory interconnection.  A theoretical
justification is provided. 
\end{abstract}

\textbf{Keywords:}
linear equations; eigenvalue; iterative; multigrid; 
smoothing; pre-smoothing; post-smoothing; 
preconditioning; conjugate gradient; steepest descent; convergence; 
parallel software; hypre; BLOPEX; LOBPCG.

\section{Introduction}
\label{sec:intro} 
Smoothing (relaxation) and coarse-grid correction are the two cornerstones of
multigrid technique. In algebraic multigrid, where only the system matrix is
(possibly implicitly) available, smoothing is more fundamental since it is often
used to construct the coarse grid problem. In geometric multigrid, the coarse
grid is generated by taking into account the geometry of the fine grid, in
addition to the chosen smoothing procedure. If full multigrid is used as a
stand-alone solver, proper smoothing is absolutely necessary for convergence.
If multigrid is used as a preconditioner in an iterative method, one is tempted
to check what happens if smoothing is turned partially off.
\let\thefootnote\relax\footnote{A prepreint is available at
http://arxiv.org/abs/1212.6680}

For symmetric positive definite (SPD) linear systems, the preconditioner is
typically required to be also a fixed linear SPD operator, to preserve the
symmetry of the preconditioned system.  In the multigrid context, the
preconditioner symmetry is achieved by using balanced pre- and post-smoothing,
and by properly choosing the restriction and prolongation pair.  In order to get
a fixed linear preconditioner, one  avoids using nonlinear smoothing,
restriction, prolongation, or coarse solves.  The positive definiteness is
obtained by performing enough (in practice, even one may be enough), and an
equal number of, pre- and post-smoothing steps; see,
e.g.,~\cite{Bramble2000173}.
 
If smoothing is unbalanced, e.g.,\ there is one step of pre-smoothing, but no
post-smoothing,  the multigrid preconditioner becomes nonsymmetric.  Traditional
assumptions of the standard convergence theory of iterative solvers are no
longer valid, and convergence behavior may be unpredictable. The main goal of
this paper is to describe our numerical experience experimenting with the
influence of unbalanced smoothing in  practical geometric multigrid
preconditioning, specifically, the  Semicoarsening Multigrid (SMG) method,
see~\cite{schaffer305237}, provided by the parallel software package
\emph{hypre}~\cite{hypreUserManual}.  

We numerically analyze the possibility of turning off post-smoothing in
geometric multigrid when used as a preconditioner in iterative linear and
eigenvalue solvers for the 3D Laplacian in \emph{hypre}.   We solve linear
systems using two variants (standard and flexible, e.g.,\ \cite{golub1305}) of
the preconditioned conjugate gradient (PCG) and preconditioned steepest descent
(PSD) methods.  The standard PCG is already coded in \emph{hypre}. We have
written the codes of flexible PCG and PSD by modifying the \emph{hypre} standard
PCG function.  The eigenvalue problems are solved using the locally optimal
block preconditioned conjugate gradient (LOBPCG) method, readily available in
\emph{hypre} through BLOPEX~\cite{knyazev2224}.

We observe that turning off the post-smoothing in SMG dramatically slows down
the standard PCG-SMG.  However, for the flexible PCG  and LOBPCG, our numerical
tests show that post-smoothing can be avoided.  In the latter case, turning off
the post-smoothing in SMG results in overall acceleration, due to the high costs
of smoothing and relatively insignificant decrease in convergence speed.  Our
observations are also generally applicable for algebraic multigrid
preconditioning for graph Laplacians, appearing, e.g., in computational
photography problems, as well as in 3D mesh processing
tasks~\cite{krishnan:191409}.

A different case of non-standard preconditioning, specifically, \emph{variable
preconditioning}, in PCG is considered in our earlier work~\cite{knyazev1267}.
There, we also find a dramatic difference in convergence speed between the
standard and flexible version of PCG.  The better convergence behavior of the
flexible PCG is explained in~\cite{knyazev1267} by its \emph{local optimality},
which guarantees its convergence with at least the speed of PSD.  Our numerical
tests there show that, in fact, the convergence of PSD and the flexible PCG is
practically very close.  We perform the same comparison here, and obtain a
similar result.    We demonstrate for linear systems that PSD-SMG converges
almost as fast as the  flexible PCG-SMG if SMG post-smoothing is off in both
methods.  

Our numerical experiments are executed in both a strictly shared memory
environment and in a distributed memory environment.  Different size problems
and with different shapes of the bricks are solved in our shared versus
distributed tests, so the results are not directly comparable with each other.
Our motivation to test both shared and distributed cases is to investigate how
the effect of acceleration depends on the memory interconnection speed.

The rest of the paper is organized as follows.  We formally describe the PSD and
PCG methods used here for testing, and explain their differences.  We briefly
discuss the SMG preconditioning in \emph{hypre} and present our numerical
results for linear systems. One section is dedicated to eigenvalue problems. Our
last section contains the relevant theory. 

\section{PSD and PCG methods}
\label{sec:PCG}   
For a general exposition of PSD and PCG, let  
SPD matrices $A$ and $T$, and vectors $b$ and $x_0$ be given, and denote $r_k = b - Ax_k$.  
Algorithm~\ref{alg:CG} is described 
in~\cite{knyazev1267}  

\begin{algorithm}
    \For{$k=0,1,\ldots$}{
    $s_k = T r_k$\\
    \eIf{$k=0$}{
      $p_0 = s_0$\\
    }
    {
    $p_k = s_k + \beta_k p_{k-1}$ (where $\beta_k$ is 
    either~(\ref{eqn:beta_orig}) or~(\ref{eqn:beta_alt}) for all iterations)\\
    }
    $\displaystyle \alpha_k = \frac{(s_k,r_k)}{(p_k,Ap_k)}$\\
    $x_{k+1} = x_k + \alpha_k p_k$\\
    $r_{k+1} = r_k - \alpha_k Ap_k$\\
    }
    \caption{PSD and PCG methods}
    \label{alg:CG}
\end{algorithm}

Various methods are obtained by using different formulas for the scalar $\beta_k$. 
We set $\beta_k=0$ for PSD, 
\begin{equation}
    \beta_k = \frac{(s_k,r_k)}{(s_{k-1},r_{k-1})} 
\label{eqn:beta_orig}
\end{equation}
for the standard PCG, or 
\begin{equation}
    \beta_k = \frac{(s_k,r_k-r_{k-1})}{(s_{k-1},r_{k-1})}
\label{eqn:beta_alt}
\end{equation}
for the flexible PCG.

We note that in using~(\ref{eqn:beta_alt}), we are merely subtracting one term, 
$(s_k,r_{k-1})$, 
in the numerator of~(\ref{eqn:beta_orig}), which appears in the standard CG algorithm.
If $T$ is a fixed SPD matrix, this term actually vanishes; see, e.g.,\ \cite{knyazev1267}. 
By using~(\ref{eqn:beta_alt}) in a computer code, 
it is required that an extra vector be allocated to either calculate $r_k - r_{k-1}$ 
or store $-\alpha_k Ap_k$, compared to~(\ref{eqn:beta_orig}). 
The associated costs may be noticeable for large problems solved on 
parallel computers. 
Next, we numerically evaluate the extra costs by comparing the 
standard and flexible PCG 
with no preconditioning 
for a variety of problem sizes. 

Our model problem used for all calculations in the present paper is for the
three-dimensional negative Laplacian in a brick with homogeneous Dirichlet
boundary conditions approximated by the standard finite difference scheme using
the 7-point stencil with the grid size one in all three directions. The initial
approximation is (pseudo)random.  We simply call a code, called \emph{struct},
which is provided in  \emph{hypre} to test SMG, with different command-line
options. 

To generate the data to compare iterative methods, we execute the following
command,
\begin{lstlisting}
mpiexec -np 16 ./struct -n $n $n $n -solver 19
\end{lstlisting}
for the shared memory experiments and 
\begin{lstlisting}
mpiexec -np 384 ./struct -n $n $n $n -solver 19
\end{lstlisting}
for the distributed memory experiments, where \$n runs from 10 to 180, and
determines the number of grid points in each of the three directions \emph{per
processor}.  The size of the brick here is \$np-times-\$n-by-\$n-by-\$n, i.e., the
brick gets longer in the first direction with the increase in the number of
cores.  For example, using the largest value \$n=180, the maximum problem size
we solve is 16x180-by-180-by-180=93,312,000 unknowns for \$np=16 and
384x180-by-180-by-180=2,239,488,000 unknowns for \$np=384.  The option \emph{-solver
19} tells the driver \emph{struct} to use no preconditioning.  The MPI option
\emph{-np~16} means that we run on 16 cores and we restrict to using only one
node for the shared memory whereas for distributed memory we use 16 cores on 24
nodes with the MPI option \emph{-np~384}. In fact, all our tests in this paper
are performed on either 16 cores on one node or 16 cores on each of the 24
nodes, so in the rest of the paper we always omit the ``mpiexec -np 16(384)''
part of the execution command for brevity.  

Our comprehensive studies are performed on a 24 node cluster where each node is
a dual-socket octo-core machine based on an Intel Xeon E5-2670 Sandy Bridge
processor running at 2.60 GHz with 64 GB RAM of shared memory per node.
Switched communications fabric between the nodes is provided by an InfiniBand 4X
QDR Interconnect.  Each experiment is performed using 16 cores on one node using
shared memory and also using 16 cores on all 24 nodes via distributed memory.

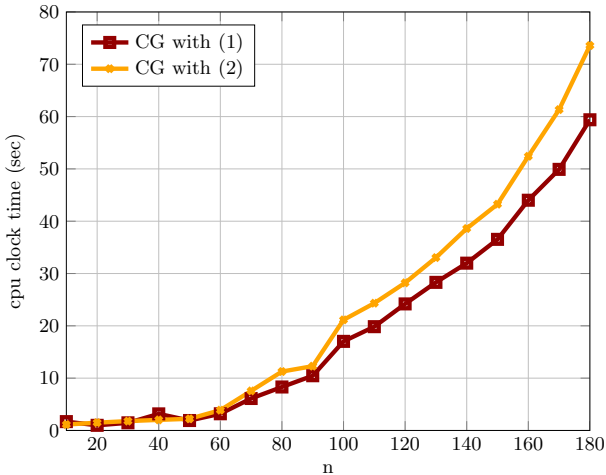
\begin{figure}[ht]
\centering
\subfloat[Shared memory]{
    \label{fig:extra_storage_costs_smp}
    \setlength\figurewidth{\linewidth} 
    \setlength\figureheight{0.80\figurewidth} 
    \hspace{-15pt}
    \resizebox{.475\textwidth}{!}{{\scalefont{1}
    
\definecolor{mycolor1}{rgb}{0.576470588235294,0,0}
\definecolor{mycolor2}{rgb}{1,0.647058823529412,0}

\begin{tikzpicture}

\begin{axis}[%
width=\figurewidth,
height=\figureheight,
scale only axis,
xmin=10,
xmax=180,
xtick={20,40,60,80,100,120,140,160,180},
xlabel={n},
xmajorgrids,
ymin=0,
ymax=80,
ytick={0,10,20,30,40,50,60,70,80,90,100},
ylabel={cpu clock time (sec)},
ymajorgrids,
title style={align=center},
title={Timing results of SD and CG\\[1ex](16 processors on 1 node)},
legend style={at={(0.03,0.97)},anchor=north west,draw=black,fill=white,legend cell align=left}
]

\addplot [
color=mycolor1,
solid,
line width=2.0pt,
mark=square,
mark options={solid}
]
table[row sep=crcr]{
10 0.45\\
20 0.03\\
30 0.06\\
40 0.21\\
50 0.56\\
60 1.48\\
70 2.21\\
80 3.91\\
90 4.87\\
100 6.6\\
110 8.48\\
120 10.8\\
130 13.91\\
140 16.98\\
150 22.21\\
160 25.41\\
170 30.64\\
180 38.56\\
};
\addlegendentry{CG with (1)};

\addplot [
color=mycolor2,
solid,
line width=2.0pt,
mark=x,
mark options={solid}
]
table[row sep=crcr]{
10 0.01\\
20 0.03\\
30 0.07\\
40 1.02\\
50 0.67\\
60 1.54\\
70 2.7\\
80 4.61\\
90 5.77\\
100 8.39\\
110 10.93\\
120 13.4\\
130 17.43\\
140 20.51\\
150 24.99\\
160 30.87\\
170 36.53\\
180 46.46\\
};
\addlegendentry{CG with (2)};

\end{axis}
\end{tikzpicture}%

    }}
}\\
\subfloat[Distributed memory]{
    \label{fig:extra_storage_costs_dist}
    \setlength\figurewidth{\linewidth} 
    \setlength\figureheight{0.80\figurewidth} 
    \hspace{-15pt}
    \resizebox{.475\textwidth}{!}{{\scalefont{1}
\definecolor{mycolor1}{rgb}{0.576470588235294,0,0}
\definecolor{mycolor2}{rgb}{1,0.647058823529412,0}

\begin{tikzpicture}

\begin{axis}[%
width=\figurewidth,
height=\figureheight,
scale only axis,
xmin=10,
xmax=180,
xtick={20,40,60,80,100,120,140,160,180},
xlabel={n},
xmajorgrids,
ymin=0,
ymax=80,
ytick={0,10,20,30,40,50,60,70,80,90,100},
ylabel={cpu clock time (sec)},
ymajorgrids,
title style={align=center},
title={Timing results of SD and CG\\[1ex](16 processors each on 24 nodes)},
legend style={at={(0.03,0.97)},anchor=north west,draw=black,fill=white,legend cell align=left}
]

\addplot [
color=mycolor1,
solid,
line width=2.0pt,
mark=square,
mark options={solid}
]
table[row sep=crcr]{
10 1.69\\
20 0.98\\
30 1.46\\
40 3.17\\
50 1.9\\
60 3.19\\
70 6.11\\
80 8.31\\
90 10.46\\
100 17.03\\
110 19.84\\
120 24.19\\
130 28.31\\
140 31.98\\
150 36.53\\
160 44.01\\
170 49.91\\
180 59.39\\
};
\addlegendentry{CG with (1)};

\addplot [
color=mycolor2,
solid,
line width=2.0pt,
mark=x,
mark options={solid}
]
table[row sep=crcr]{
10 1.13\\
20 1.49\\
30 1.79\\
40 2.02\\
50 2.23\\
60 3.88\\
70 7.56\\
80 11.26\\
90 12.29\\
100 21.14\\
110 24.33\\
120 28.22\\
130 33.04\\
140 38.6\\
150 43.25\\
160 52.41\\
170 61.33\\
180 73.58\\
};
\addlegendentry{CG with (2)};

\end{axis}
\end{tikzpicture}%

}}
}
\caption{Timing of unpreconditioned CG using~(\ref{eqn:beta_orig}) and~(\ref{eqn:beta_alt}).}
\label{fig:extra_storage_costs} 
\end{figure}

In Figure~\ref{fig:extra_storage_costs}, for the CG method without
preconditioning we see a 20-25\% cost overhead incurred due to the extra storage
and calculation for the flexible variant,~(\ref{eqn:beta_alt}), relative to the
standard variant,~(\ref{eqn:beta_orig}).  Note that in each instance of the
problem, the number of iterations of the CG method is the same regardless if
either~(\ref{eqn:beta_orig}) or~(\ref{eqn:beta_alt}) is used for $\beta_k$. 

From Figure~\ref{fig:extra_storage_costs} one observes that the distributed
memory (DM) case is not perfectly scalable, as there is about a 50\% increase
when the size of the problem is increased 24 times, proportionally to the number
of processors.  The same comparison actually holds for all other (a) and (b)
figures up to Figure~\ref{fig:LOBPCG_time}.

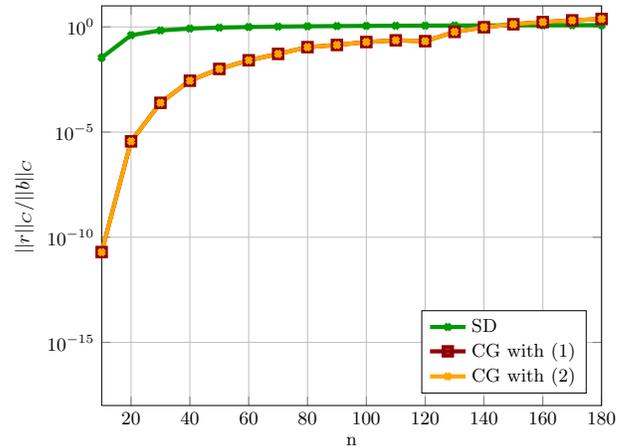
\begin{figure}[ht]
\begin{center}
    \setlength\figurewidth{\linewidth} 
    \setlength\figureheight{0.80\figurewidth} 
    \resizebox{.475\textwidth}{!}{
\definecolor{mycolor1}{rgb}{0.576470588235294,0,0}
\definecolor{mycolor2}{rgb}{1,0.647058823529412,0}

\begin{tikzpicture}

\begin{semilogyaxis}[%
width=\figurewidth,
height=\figureheight,
scale only axis,
xmin=10,
xmax=180,
xtick={20,40,60,80,100,120,140,160,180},
xlabel={n},
xmajorgrids,
ymin=1e-18,
ymax=10,
ytick={1e-15,1e-10,1e-05,1},
yminorticks=true,
ylabel={$||r||_C/||b||_C$},
ymajorgrids,
yminorgrids,
title style={align=center},
title={Final residual error of SD and CG\\[1ex](16 processors on 1 node)},
legend style={at={(0.97,0.03)},anchor=south east,draw=black,fill=white,legend cell align=left}
]
\addplot [
color=green!60!black,
solid,
line width=2.0pt,
mark=x,
mark options={solid}
]
table[row sep=crcr]{
10 0.03434329\\
20 0.4031255\\
30 0.6796543\\
40 0.8329061\\
50 0.9263677\\
60 0.9891993\\
70 1.034388\\
80 1.068504\\
90 1.095231\\
100 1.116796\\
110 1.134627\\
120 1.149678\\
130 1.162615\\
140 1.17391\\
150 1.183914\\
160 1.192885\\
170 1.201021\\
180 1.208476\\
};
\addlegendentry{SD};

\addplot [
color=mycolor1,
solid,
line width=2.0pt,
mark=square,
mark options={solid}
]
table[row sep=crcr]{
10 1.978762e-11\\
20 3.626382e-06\\
30 0.00024595\\
40 0.002766415\\
50 0.01004343\\
60 0.02588461\\
70 0.05198589\\
80 0.1091808\\
90 0.1385635\\
100 0.191013\\
110 0.2298671\\
120 0.2097425\\
130 0.5797527\\
140 0.9655748\\
150 1.338177\\
160 1.699519\\
170 2.05058\\
180 2.391975\\
};
\addlegendentry{CG with (1)};

\addplot [
color=mycolor2,
solid,
line width=2.0pt,
mark=x,
mark options={solid}
]
table[row sep=crcr]{
10 1.949569e-11\\
20 3.626382e-06\\
30 0.00024595\\
40 0.002766415\\
50 0.01004343\\
60 0.02588461\\
70 0.05198589\\
80 0.1091808\\
90 0.1385635\\
100 0.191013\\
110 0.2298671\\
120 0.2097425\\
130 0.5797527\\
140 0.9655748\\
150 1.338177\\
160 1.699519\\
170 2.05058\\
180 2.391975\\
};
\addlegendentry{CG with (2)};

\end{semilogyaxis}
\end{tikzpicture}%
    }
\end{center}
 \caption{Accuracy comparison of SD and CG using~(\ref{eqn:beta_orig})~and~(\ref{eqn:beta_alt}).}
    \label{fig:final_residual}
 \end{figure}
In Figure~\ref{fig:final_residual} we see, as expected without preconditioning,
that the relative final residuals for both CG algorithms are identical, and that
the SD algorithm of course performs much worse than either.  The number of
iterative steps is capped by 100, so most of Figure~\ref{fig:final_residual}
shows the residual after 100 iterations, except for a small straight part of the
CG line for \$n=10,20,30, where the iterations have converged. 

%

\section{Preconditioning with SMG}
\label{sec:precon_smg} 
In order to help
overcome the slow convergence of the CG method, 
as observed in Figure~\ref{fig:final_residual}, it is customary to introduce
preconditioning.  Here, we use the SMG solver as a
preconditioner, provided by  \emph{hypre}. 
The SMG solver/preconditioner uses plane-relaxation
as a smoother at each level in the V-cycle~\cite{hypreUserManual}.
The number of pre- and post-relaxation smoothing steps 
is controlled by a command line parameter in the \emph{struct}
test driver. The data in
Figures~\ref{fig:timing_1} and~\ref{fig:iterations_1} is obtained by
\begin{lstlisting}
./struct -n $n $n $n -solver 10 -v 1 1 
\end{lstlisting}
in which the  \emph{-solver 10} option refers to the SMG preconditioning, and 
the number of pre- and post-relaxation smoothing steps is specified by the  \emph{-v} 
flag---one step each of pre- and post-relaxation in this call. 

\begin{figure}[ht]
\centering
\subfloat[Shared memory]{
    \label{fig:timing_1_smp}
    \setlength\figurewidth{\linewidth} 
    \setlength\figureheight{0.80\figurewidth} 
    \resizebox{.45\textwidth}{!}{{\scalefont{1}
\definecolor{mycolor1}{rgb}{0.576470588235294,0,0}
\definecolor{mycolor2}{rgb}{1,0.647058823529412,0}

\begin{tikzpicture}

\begin{axis}[%
width=\figurewidth,
height=\figureheight,
scale only axis,
xmin=10,
xmax=180,
xtick={20,40,60,80,100,120,140,160,180},
xlabel={n},
xmajorgrids,
ymin=0,
ymax=350,
ytick={0,50,100,150,200,250,300,350},
ylabel={cpu clock time (sec)},
ymajorgrids,
title style={align=center},
title={Timing results of PCG with SMG preconditioner\\[1ex](16 processors on 1 node)},
legend style={at={(0.03,0.97)},anchor=north west,draw=black,fill=white,legend cell align=left}
]
\addplot [
color=green!60!black,
solid,
line width=2.0pt,
mark=square,
mark options={solid}
]
table[row sep=crcr]{
10 0.1\\
20 0.38\\
30 0.67\\
40 1.53\\
50 2.9\\
60 4.73\\
70 8.81\\
80 15.96\\
90 24.2\\
100 36.51\\
110 51.42\\
120 68.63\\
130 85.46\\
140 112.66\\
150 145.25\\
160 167.51\\
170 204.38\\
180 262.9\\
};
\addlegendentry{PSD-SMG};

\addplot [
color=mycolor1,
solid,
line width=2.0pt,
mark=square,
mark options={solid}
]
table[row sep=crcr]{
10 0.09\\
20 0.35\\
30 0.58\\
40 1.23\\
50 2.22\\
60 3.98\\
70 7.77\\
80 16.94\\
90 20.82\\
100 30.99\\
110 42.58\\
120 55.39\\
130 74.49\\
140 95.81\\
150 119.63\\
160 143.21\\
170 174.97\\
180 201.93\\
};
\addlegendentry{PCG-SMG with (1)};

\addplot [
color=mycolor2,
solid,
line width=2.0pt,
mark=x,
mark options={solid}
]
table[row sep=crcr]{
10 0.08\\
20 0.34\\
30 0.57\\
40 1.23\\
50 2.23\\
60 3.99\\
70 7.53\\
80 12.54\\
90 21.09\\
100 31.29\\
110 43.81\\
120 58.34\\
130 72.5\\
140 96.27\\
150 116.02\\
160 147.15\\
170 167.99\\
180 205.89\\
};
\addlegendentry{PCG-SMG with (2)};

\end{axis}
\end{tikzpicture}%
}
}
}\\
\subfloat[Distributed memory]{
    \label{fig:timing_1_dist}
    \setlength\figurewidth{\linewidth} 
    \setlength\figureheight{0.80\figurewidth} 
    \resizebox{.45\textwidth}{!}{{\scalefont{1}
\definecolor{mycolor1}{rgb}{0.576470588235294,0,0}
\definecolor{mycolor2}{rgb}{1,0.647058823529412,0}

\begin{tikzpicture}

\begin{axis}[%
width=\figurewidth,
height=\figureheight,
scale only axis,
xmin=10,
xmax=180,
xtick={20,40,60,80,100,120,140,160,180},
xlabel={n},
xmajorgrids,
ymin=0,
ymax=350,
ytick={0,50,100,150,200,250,300,350},
ylabel={cpu clock time (sec)},
ymajorgrids,
title style={align=center},
title={Timing results of PCG with SMG preconditioner\\[1ex](16 processors each on 24 nodes)},
legend style={at={(0.03,0.97)},anchor=north west,draw=black,fill=white,legend cell align=left}
]
\addplot [
color=green!60!black,
solid,
line width=2.0pt,
mark=square,
mark options={solid}
]
table[row sep=crcr]{
10 1.27\\
20 2.39\\
30 3.74\\
40 6.81\\
50 8.33\\
60 10.78\\
70 27.73\\
80 29.45\\
90 38.24\\
100 71.59\\
110 103.23\\
120 116.98\\
130 157.7\\
140 197.98\\
150 238.79\\
160 288.98\\
170 325.82\\
180 417.85\\
};
\addlegendentry{PSD-SMG};

\addplot [
color=mycolor1,
solid,
line width=2.0pt,
mark=square,
mark options={solid}
]
table[row sep=crcr]{
10 1.21\\
20 1.78\\
30 3.03\\
40 6.95\\
50 7.31\\
60 9.2\\
70 26.81\\
80 24.18\\
90 33.13\\
100 65.71\\
110 87.14\\
120 101.87\\
130 135.34\\
140 169.8\\
150 202.66\\
160 243.66\\
170 276.76\\
180 333.04\\
};
\addlegendentry{PCG-SMG with (1)};

\addplot [
color=mycolor2,
solid,
line width=2.0pt,
mark=x,
mark options={solid}
]
table[row sep=crcr]{
10 0.74\\
20 2.31\\
30 2.95\\
40 6.79\\
50 8.67\\
60 9.73\\
70 24.1\\
80 24.04\\
90 31.46\\
100 62.79\\
110 86.04\\
120 98.94\\
130 135.6\\
140 167.66\\
150 204.89\\
160 248.12\\
170 276.85\\
180 334.58\\
};
\addlegendentry{PCG-SMG with (2)};

\end{axis}
\end{tikzpicture}%
}}
}
   \caption{Cost for storage and calculation in PCG-SMG with~(\ref{eqn:beta_orig})
or~(\ref{eqn:beta_alt})}
\label{fig:timing_1}
\end{figure}
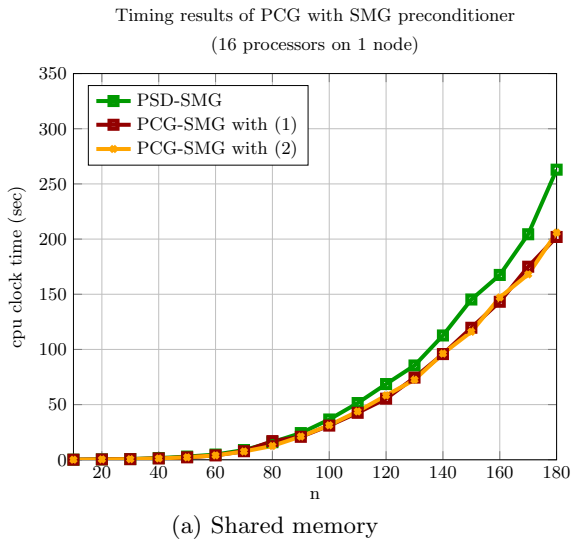
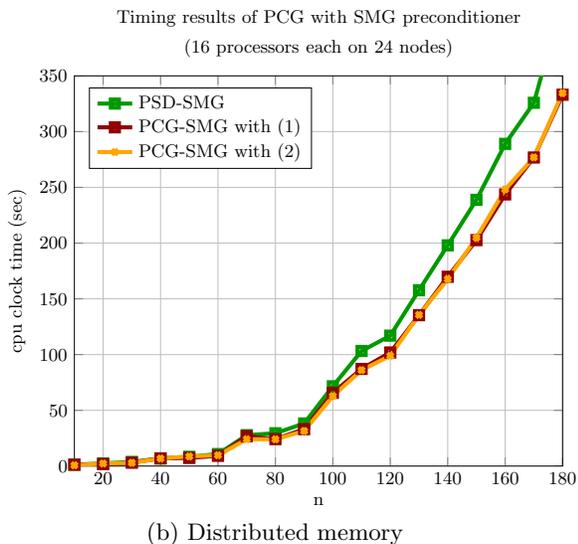
\begin{figure}[ht]
\begin{center}
    \setlength\figurewidth{\linewidth} 
    \setlength\figureheight{0.80\figurewidth} 
    \resizebox{.45\textwidth}{!}{
\definecolor{mycolor1}{rgb}{0.576470588235294,0,0}
\definecolor{mycolor2}{rgb}{1,0.647058823529412,0}

\begin{tikzpicture}

\begin{axis}[%
width=\figurewidth,
height=\figureheight,
scale only axis,
xmin=10,
xmax=180,
xtick={20,40,60,80,100,120,140,160,180},
xlabel={n},
xmajorgrids,
ymin=0,
ymax=19,
ytick={0,2,4,6,8,10,12,14,16,18},
ylabel={\# iterations},
ymajorgrids,
title style={align=center},
title={Iteration results of PCG and PSD with SMG preconditioner\\[1ex](16 processors on 1 node)},
legend style={at={(0.97,0.03)},anchor=south east,draw=black,fill=white,legend cell align=left}
]
\addplot [
color=green!60!black,
solid,
line width=2.0pt,
mark=x,
mark options={solid}
]
table[row sep=crcr]{
10 10\\
20 11\\
30 12\\
40 12\\
50 12\\
60 12\\
70 12\\
80 13\\
90 13\\
100 13\\
110 13\\
120 13\\
130 13\\
140 13\\
150 13\\
160 13\\
170 13\\
180 14\\
};
\addlegendentry{PSD-SMG};

\addplot [
color=mycolor1,
solid,
line width=2.0pt,
mark=square,
mark options={solid}
]
table[row sep=crcr]{
10 9\\
20 10\\
30 10\\
40 10\\
50 10\\
60 10\\
70 10\\
80 10\\
90 11\\
100 11\\
110 11\\
120 11\\
130 11\\
140 11\\
150 11\\
160 11\\
170 11\\
180 11\\
};
\addlegendentry{PCG-SMG with (1)};

\addplot [
color=mycolor2,
solid,
line width=2.0pt,
mark=x,
mark options={solid}
]
table[row sep=crcr]{
10 9\\
20 10\\
30 10\\
40 10\\
50 10\\
60 10\\
70 10\\
80 10\\
90 11\\
100 11\\
110 11\\
120 11\\
130 11\\
140 11\\
150 11\\
160 11\\
170 11\\
180 11\\
};
\addlegendentry{PCG-SMG with (2)};

\end{axis}
\end{tikzpicture}%
    }
\end{center}
   \caption{Iteration count for PSD-SMG and PCG-SMG with~(\ref{eqn:beta_orig})~or~(\ref{eqn:beta_alt})}
 \label{fig:iterations_1}
\end{figure}
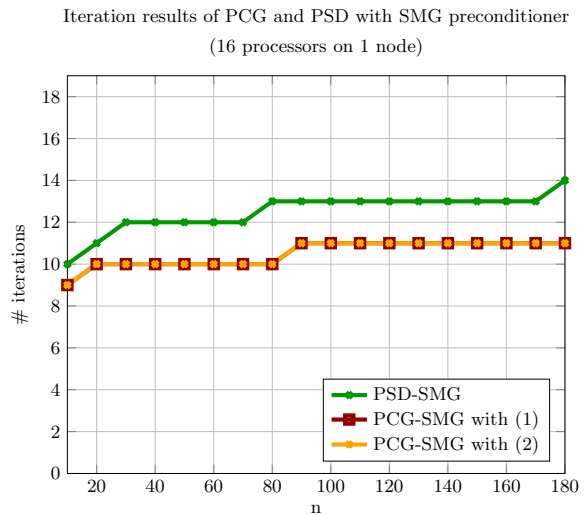

Although using~(\ref{eqn:beta_alt}) 
introduces some extra overhead, in the case of the SMG preconditioning, 
this is negligible as can be seen in
Figure~\ref{fig:timing_1}, since the SMG preconditioning, first, is
relatively expensive computationally and, second, makes the 
PCG converge much faster, see Figure~\ref{fig:iterations_1}, 
compared to the non-preconditioned case displayed in
Figure~\ref{fig:final_residual}, although the comparison is a a little confusing, since Figure~\ref{fig:final_residual} is a 
residual error plot (iterations were capped at 100) while Figure~\ref{fig:iterations_1} shows the
iteration number (convergence was obtained prior to 100 iterations).
As in Figure~\ref{fig:final_residual}, for each instance of the PCG method, the
number of iterations is equal as depicted in Figure~\ref{fig:iterations_1}.
The PSD method performs a bit worse (only 2-3 more iterations) than either variant of PCG and
not nearly as much worse as in Figure~\ref{fig:final_residual}, which clearly 
indicates the high quality of the SMG preconditioner for these problems. 

Using the same number of pre- and post-relaxation steps creates a symmetric preconditioner, 
in this test, an SPD preconditioner. 
Thus both PCG-SMG,~(\ref{eqn:beta_orig}) and~(\ref{eqn:beta_alt}),
are expected to generate identical (up to round-off errors) iterative approximations.
We indeed observe this effect in Figure~\ref{fig:iterations_1}, where the data points for 
the PCG-SMG with~(\ref{eqn:beta_orig})~and~(\ref{eqn:beta_alt})
are indistinguishable. 

If no post-relaxation is performed within the SMG preconditioner, the convergence of
the standard PCG method, i.e.,\ with~(\ref{eqn:beta_orig}), 
is significantly slowed down, almost to the level where
preconditioning is useless.  The command 
used for this comparison is:
\begin{lstlisting}
./struct -n 80 80 80 -solver 10 -v 1 0
\end{lstlisting}
in which the unbalanced relaxation is specified by the \emph{-v} flag
and the grid is 1280x80x80. 
In Figure~\ref{fig:iters_80}, we compare convergence 
of PCG-SMG using~(\ref{eqn:beta_orig})~and~(\ref{eqn:beta_alt}) and
PSD-SMG, where the 
SMG preconditioner has no post-relaxation.  
In PCG-SMG using~(\ref{eqn:beta_alt}) without post-relaxation of the SMG preconditioner, we
still achieve a very good convergence rate, as well as for PSD-SMG.  

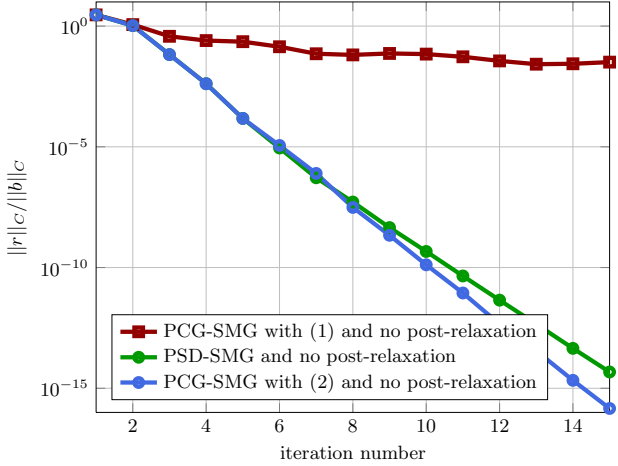
\begin{figure}[ht]
\begin{center}
    \setlength\figurewidth{\linewidth} 
    \setlength\figureheight{0.80\figurewidth} 
    \hspace{-5ex}
    \resizebox{.475\textwidth}{!}{
\definecolor{mycolor1}{rgb}{0.576470588235294,0,0}
\definecolor{mycolor2}{rgb}{0.254901960784314,0.411764705882353,0.882352941176471}

\begin{tikzpicture}

\begin{semilogyaxis}[%
width=\figurewidth,
height=\figureheight,
scale only axis,
xmin=1,
xmax=15,
xtick={2,4,6,8,10,12,14},
xlabel={iteration number},
xmajorgrids,
ymin=1e-16,
ymax=10,
ytick={1e-15,1e-10,1e-05,1},
yminorticks=true,
ylabel={$||r||_C/||b||_C$},
ymajorgrids,
yminorgrids,
title style={align=center},
title={Iteration error of PCG and PSD with SMG preconditioner\\[1ex](16 processors on 1 node)},
legend style={at={(0.03,0.03)},anchor=south west,draw=black,fill=white,legend cell align=left}
]
\addplot [
color=mycolor1,
solid,
line width=2.0pt,
mark=square,
mark options={solid}
]
table[row sep=crcr]{
1 2.913096\\
2 1.151389\\
3 0.3773084\\
4 0.2513774\\
5 0.2261113\\
6 0.1393729\\
7 0.07144089\\
8 0.06408111\\
9 0.07352382\\
10 0.06911076\\
11 0.05345101\\
12 0.03612713\\
13 0.02640552\\
14 0.02748725\\
15 0.03218463\\
16 0.03496089\\
};
\addlegendentry{PCG-SMG with (1) and no post-relaxation};

\addplot [
color=green!60!black,
solid,
line width=2.0pt,
mark=o,
mark options={solid}
]
table[row sep=crcr]{
1 2.913096\\
2 1.043479\\
3 0.06613261\\
4 0.004120495\\
5 0.0001500428\\
6 8.938381e-06\\
7 5.220239e-07\\
8 5.163742e-08\\
9 4.525627e-09\\
10 4.633801e-10\\
11 4.484943e-11\\
12 4.461817e-12\\
13 4.559614e-13\\
14 4.495435e-14\\
15 4.728019e-15\\
16 4.663512e-16\\
};
\addlegendentry{PSD-SMG and no post-relaxation};

\addplot [
color=mycolor2,
solid,
line width=2.0pt,
mark=o,
mark options={solid}
]
table[row sep=crcr]{
1 2.913096\\
2 1.043637\\
3 0.06587817\\
4 0.00409623\\
5 0.0001497729\\
6 1.13633e-05\\
7 7.901603e-07\\
8 3.088125e-08\\
9 2.162154e-09\\
10 1.297309e-10\\
11 8.829039e-12\\
12 4.558494e-13\\
13 3.5472e-14\\
14 2.132461e-15\\
15 1.458223e-16\\
};
\addlegendentry{PCG-SMG with (2) and no post-relaxation};

\end{semilogyaxis}
\end{tikzpicture}%
    }
\end{center}
\caption{Iteration Error for PCG-SMG and PSD-SMG}
\label{fig:iters_80}
\end{figure}  

The convergence behavior in Figure~\ref{fig:iters_80} is similar to that 
observed in~\cite{knyazev1267}.
There, a variable SPD preconditioner makes 
the standard PCG, i.e.,\ using (\ref{eqn:beta_orig}), almost stall, while 
the convergence rates of PSD and flexible PCG, 
i.e.,\ with ~(\ref{eqn:beta_alt}), 
are good and close to each other.

However, the SMG preconditioner is fixed and linear, according to its 
description in~\cite{schaffer305237} and our numerical verification, 
so the theoretical explanation of such a convergence behavior in~\cite{knyazev1267}
is not directly applicable here. Moreover, turning off the post-relaxation smoothing in 
a multigrid preconditioner makes it nonsymmetric---the case not 
theoretically covered in~\cite{knyazev1267}, 
where the assumption is always made that the preconditioner is SPD. 

\begin{figure}[ht]
\centering
\subfloat[Shared memory]{
    \label{fig:timing_smp}
    \setlength\figurewidth{\linewidth} 
    \setlength\figureheight{0.80\figurewidth} 
    \hspace{-15pt}
    \resizebox{.475\textwidth}{!}{{\scalefont{1}
\definecolor{mycolor1}{rgb}{0.576470588235294,0,0}
\definecolor{mycolor2}{rgb}{1,0.647058823529412,0}
\definecolor{mycolor3}{rgb}{0.254901960784314,0.411764705882353,0.882352941176471}

\begin{tikzpicture}

\begin{axis}[%
width=\figurewidth,
height=\figureheight,
scale only axis,
xmin=10,
xmax=180,
xtick={20,40,60,80,100,120,140,160,180},
xlabel={n},
xmajorgrids,
ymin=0,
ymax=350,
ytick={0,50,100,150,200,250,300,350},
ylabel={cpu clock time (sec)},
ymajorgrids,
title style={align=center},
title={Timing results of PCG with SMG preconditioner\\[1ex](16 processors on 1 node)},
legend style={at={(0.03,0.97)},anchor=north west,draw=black,fill=white,legend cell align=left}
]
\addplot [
color=mycolor1,
solid,
line width=2.0pt,
mark=square,
mark options={solid}
]
table[row sep=crcr]{
10 0.09\\
20 0.35\\
30 0.58\\
40 1.23\\
50 2.22\\
60 3.98\\
70 7.77\\
80 16.94\\
90 20.82\\
100 30.99\\
110 42.58\\
120 55.39\\
130 74.49\\
140 95.81\\
150 119.63\\
160 143.21\\
170 174.97\\
180 201.93\\
};
\addlegendentry{PCG-SMG with (1) and balanced relaxation};

\addplot [
color=mycolor2,
solid,
line width=2.0pt,
mark=x,
mark options={solid}
]
table[row sep=crcr]{
10 0.08\\
20 0.34\\
30 0.57\\
40 1.23\\
50 2.23\\
60 3.99\\
70 7.53\\
80 12.54\\
90 21.09\\
100 31.29\\
110 43.81\\
120 58.34\\
130 72.5\\
140 96.27\\
150 116.02\\
160 147.15\\
170 167.99\\
180 205.89\\
};
\addlegendentry{PCG-SMG with (2) and balanced relaxation};

\addplot [
color=mycolor3,
solid,
line width=2.0pt,
mark=o,
mark options={solid}
]
table[row sep=crcr]{
10 0.04\\
20 0.17\\
30 0.3\\
40 0.64\\
50 1.3\\
60 2.41\\
70 4.53\\
80 7.51\\
90 11.28\\
100 17.58\\
110 22.88\\
120 35.44\\
130 40.75\\
140 54.04\\
150 66.91\\
160 82.37\\
170 96.19\\
180 117.89\\
};
\addlegendentry{PCG-SMG with (2) and no post-relaxation};

\end{axis}
\end{tikzpicture}%
}}
}\\
\vspace{4pt}
\subfloat[Distributed memory]{
    \label{fig:timing_dist}
    \setlength\figurewidth{\linewidth} 
    \setlength\figureheight{0.80\figurewidth} 
    \hspace{-15pt}
    \resizebox{.475\textwidth}{!}{{\scalefont{1}
\definecolor{mycolor1}{rgb}{0.576470588235294,0,0}
\definecolor{mycolor2}{rgb}{1,0.647058823529412,0}
\definecolor{mycolor3}{rgb}{0.254901960784314,0.411764705882353,0.882352941176471}

\begin{tikzpicture}

\begin{axis}[%
width=\figurewidth,
height=\figureheight,
scale only axis,
xmin=10,
xmax=180,
xtick={20,40,60,80,100,120,140,160,180},
xlabel={n},
xmajorgrids,
ymin=0,
ymax=350,
ytick={0,50,100,150,200,250,300,350},
ylabel={cpu clock time (sec)},
ymajorgrids,
title style={align=center},
title={Timing results of PCG with SMG preconditioner\\[1ex](16 processors each on 24 nodes)},
legend style={at={(0.03,0.97)},anchor=north west,draw=black,fill=white,legend cell align=left}
]
\addplot [
color=mycolor1,
solid,
line width=2.0pt,
mark=square,
mark options={solid}
]
table[row sep=crcr]{
10 1.21\\
20 1.78\\
30 3.03\\
40 6.95\\
50 7.31\\
60 9.2\\
70 26.81\\
80 24.18\\
90 33.13\\
100 65.71\\
110 87.14\\
120 101.87\\
130 135.34\\
140 169.8\\
150 202.66\\
160 243.66\\
170 276.76\\
180 333.04\\
};
\addlegendentry{PCG-SMG with (1) and balanced relaxation};

\addplot [
color=mycolor2,
solid,
line width=2.0pt,
mark=x,
mark options={solid}
]
table[row sep=crcr]{
10 0.74\\
20 2.31\\
30 2.95\\
40 6.79\\
50 8.67\\
60 9.73\\
70 24.1\\
80 24.04\\
90 31.46\\
100 62.79\\
110 86.04\\
120 98.94\\
130 135.6\\
140 167.66\\
150 204.89\\
160 248.12\\
170 276.85\\
180 334.58\\
};
\addlegendentry{PCG-SMG with (2) and balanced relaxation};

\addplot [
color=mycolor3,
solid,
line width=2.0pt,
mark=o,
mark options={solid}
]
table[row sep=crcr]{
10 1.03\\
20 1.61\\
30 1.86\\
40 4.11\\
50 4.83\\
60 5.54\\
70 15.94\\
80 14.92\\
90 17.97\\
100 34.92\\
110 45.79\\
120 54.33\\
130 72.13\\
140 89.92\\
150 112.42\\
160 137.45\\
170 155.71\\
180 185.83\\
};
\addlegendentry{PCG-SMG with (2) and no post-relaxation};

\end{axis}
\end{tikzpicture}%
}}
}
\caption{Cost comparison of relaxation in PCG-SMG using~(\ref{eqn:beta_orig}) 
  or~(\ref{eqn:beta_alt})}
\label{fig:timing}
\end{figure}
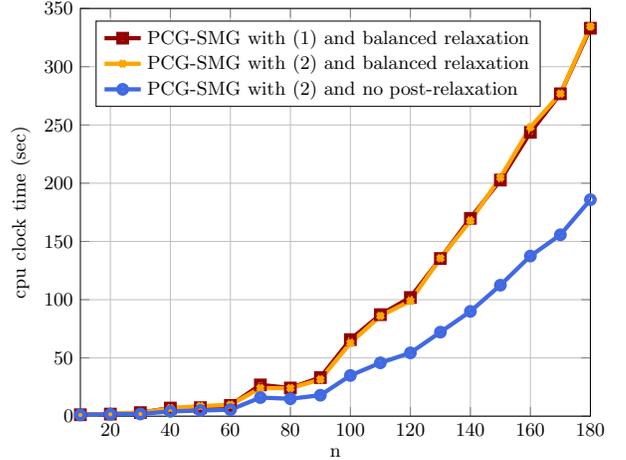

\begin{figure}[ht] 
\begin{center}
    \setlength\figurewidth{\linewidth} 
    \setlength\figureheight{0.80\figurewidth} 
    \resizebox{.475\textwidth}{!}{
\definecolor{mycolor1}{rgb}{0.254901960784314,0.411764705882353,0.882352941176471}
\definecolor{mycolor2}{rgb}{0.576470588235294,0,0}
\definecolor{mycolor3}{rgb}{1,0.647058823529412,0}

\begin{tikzpicture}

\begin{axis}[%
width=\figurewidth,
height=\figureheight,
scale only axis,
xmin=10,
xmax=180,
xtick={20,40,60,80,100,120,140,160,180},
xlabel={n},
xmajorgrids,
ymin=0,
ymax=19,
ytick={0,2,4,6,8,10,12,14,16,18},
ylabel={\# iterations},
ymajorgrids,
title style={align=center},
title={Iteration results of PCG and PSD with SMG preconditioner\\[1ex](16 processors on 1 node)},
legend style={at={(0.97,0.03)},anchor=south east,draw=black,fill=white,legend cell align=left}
]
\addplot [
color=green!60!black,
solid,
line width=2.0pt,
mark=o,
mark options={solid}
]
table[row sep=crcr]{
10 13\\
20 14\\
30 15\\
40 15\\
50 16\\
60 16\\
70 16\\
80 16\\
90 16\\
100 17\\
110 17\\
120 17\\
130 17\\
140 17\\
150 18\\
160 18\\
170 18\\
180 18\\
};
\addlegendentry{PSD-SMG with no post-relaxation};

\addplot [
color=mycolor1,
solid,
line width=2.0pt,
mark=o,
mark options={solid}
]
table[row sep=crcr]{
10 12\\
20 13\\
30 14\\
40 14\\
50 15\\
60 15\\
70 15\\
80 15\\
90 15\\
100 15\\
110 15\\
120 15\\
130 16\\
140 16\\
150 16\\
160 16\\
170 16\\
180 16\\
};
\addlegendentry{PCG-SMG with (2) and no post-relaxation};

\addplot [
color=mycolor2,
solid,
line width=2.0pt,
mark=square,
mark options={solid}
]
table[row sep=crcr]{
10 9\\
20 10\\
30 10\\
40 10\\
50 10\\
60 10\\
70 10\\
80 10\\
90 11\\
100 11\\
110 11\\
120 11\\
130 11\\
140 11\\
150 11\\
160 11\\
170 11\\
180 11\\
};
\addlegendentry{PCG-SMG with (1) and balanced relaxation};

\addplot [
color=mycolor3,
solid,
line width=2.0pt,
mark=x,
mark options={solid}
]
table[row sep=crcr]{
10 9\\
20 10\\
30 10\\
40 10\\
50 10\\
60 10\\
70 10\\
80 10\\
90 11\\
100 11\\
110 11\\
120 11\\
130 11\\
140 11\\
150 11\\
160 11\\
170 11\\
180 11\\
};
\addlegendentry{PCG-SMG with (2) and balanced relaxation};

\end{axis}
\end{tikzpicture}%
    }
    \vspace{2pt}
\end{center}
\caption{Iteration count for PSD-SMG and PCG-SMG using~(\ref{eqn:beta_orig}) 
  or~(\ref{eqn:beta_alt})}
\label{fig:iterations}
\end{figure}
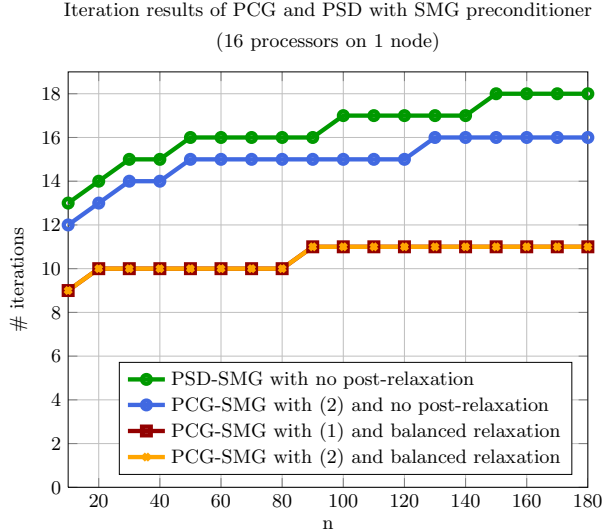 

We add the data for the absence of post-relaxation (option \emph{-v 1 0}) 
in the SMG preconditioner to 
Figures~\ref{fig:timing_1} and~\ref{fig:iterations_1} to obtain 
Figures~\ref{fig:timing} and~\ref{fig:iterations}. The number of iterations of the 
solver does increase a bit for both~(\ref{eqn:beta_alt}) and for the PSD method with
no post-relaxation in the SMG preconditioner, 
but not enough to outweigh the cost savings of not using 
the post-relaxation in the SMG preconditioner. 
The overall improvement is 43\% for all problems tested.


\section{Preconditioning of LOBPCG\\ with SMG}
\label{sec:precon_lobpcg_smg}
The LOBPCG method~\cite{knyazev:517} computes the $m$ smallest eigenvalues of a
linear operator and is implemented within \emph{hypre} through BLOPEX; see~\cite{knyazev2224}.
We conclude our numerical experiments with a comparison of the use of balanced and 
unbalanced relaxation in the SMG preconditioner for the LOBPCG method with $m=1.$

In these tests, the matrix remains the same as in the previous section, i.e.,
corresponds to the three-dimensional negative Laplacian in a brick with
homogeneous Dirichlet boundary conditions approximated by the standard finite
difference scheme using the 7-point stencil with the grid size of one in all
three directions. But in this section we compute the smallest eigenvalue and the
corresponding eigenvector, rather than solve a linear system.  The initial
approximation to the eigenvector is (pseudo)random. 

We generate the data for Figures~\ref{fig:LOBPCG_time}~and~\ref{fig:LOBPCG_itrs} 
with the commands:
\begin{lstlisting}
./struct -n $n $n $n -solver 10 -lobpcg -v 1 0
./struct -n $n $n $n -solver 10 -lobpcg -v 1 1
\end{lstlisting}
where \$n runs from 10 to 120.  We also use the \emph{-P} option in $struct$ to
make the brick more even-sided.  In the shared memory experiment we use
\emph{-P}
$4$ $2$ $2$ which will create a 4$n$-by-2$n$-by-2$n$ brick and in the distrubuted
case we use \emph{-P} $8$ $8$ $6$ which will create an 8$n$-by-8$n$-by-6$n$ brick.

For this experiment, as seen in Figure~\ref{fig:LOBPCG_itrs}, the number of
LOBPCG iterations for the non-balanced SMG preconditioner is roughly 50\% more
than that for the balanced SMG preconditioner.  However, the cost savings, which
is about 30-50\%, of not using the post-relaxation in the SMG preconditioner
outweigh the higher number of iterations, as shown in
Figure~\ref{fig:LOBPCG_time}, and thus justify turning off the post-relaxation
in this case.  From Figure~\ref{fig:LOBPCG_time} one observes that the
distributed memory case is not perfectly scalable, as there is about a 50\%
increase when the size of the problem is increased 24 times, proportionally to
the number of processors.  Finally, we note that the existing LOBPCG convergence
theory in~\cite{knyazev:517,MR2530268} requires an SPD preconditioner $T$, and
does not explain convergence if $T$ is nonsymmetric.  

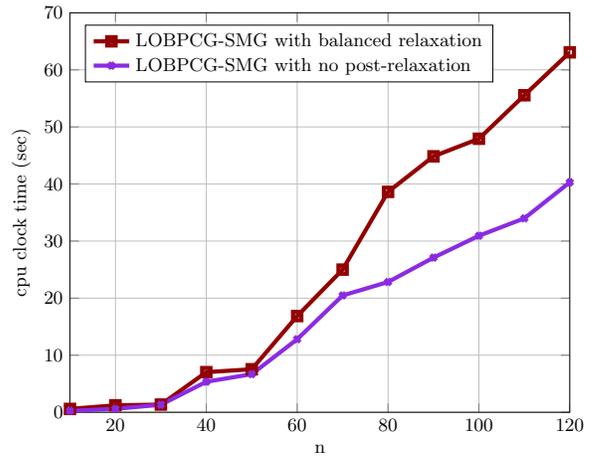
\begin{figure}[ht]
\centering
\subfloat[Shared memory]{
    \label{fig:LOBPCG_time_smp}
    \setlength\figurewidth{\linewidth} 
    \setlength\figureheight{0.80\figurewidth} 
    \resizebox{.45\textwidth}{!}{{\scalefont{1}
\definecolor{mycolor1}{rgb}{0.576470588235294,0,0}%
\definecolor{mycolor2}{rgb}{0.541176470588235,0.168627450980392,0.886274509803922}%

\begin{tikzpicture}

\begin{axis}[%
width=\figurewidth,
height=\figureheight,
scale only axis,
xmin=10,
xmax=120,
xtick={ 20,  40,  60,  80, 100, 120},
xlabel={n},
xmajorgrids,
ymin=0,
ymax=70,
ytick={ 0, 10, 20, 30, 40, 50, 60, 70},
ylabel={cpu clock time (sec)},
ymajorgrids,
title style={align=center},
title={Timing of LOBPCG-SMG with and without balanced relaxation\\[1ex](16 Processors on 1 node)},
legend style={at={(0.03,0.97)},anchor=north west,draw=black,fill=white,legend cell align=left}
]
\addplot [
color=mycolor1,
solid,
line width=2.0pt,
mark=square,
mark options={solid}
]
table[row sep=crcr]{
10 0.2\\
20 0.39\\
30 0.59\\
40 1.25\\
50 1.61\\
60 3.68\\
70 5.29\\
80 8.66\\
90 13\\
100 21.28\\
110 26.61\\
120 34.96\\
};
\addlegendentry{LOBPCG-SMG with balanced relaxation};

\addplot [
color=mycolor2,
solid,
line width=2.0pt,
mark=x,
mark options={solid}
]
table[row sep=crcr]{
10 0.08\\
20 0.17\\
30 0.29\\
40 0.52\\
50 0.95\\
60 1.7\\
70 3.1\\
80 6.1\\
90 7.67\\
100 10.92\\
110 16.33\\
120 17.44\\
};
\addlegendentry{LOBPCG-SMG with no post-relaxation};

\end{axis}
\end{tikzpicture}%
}}
}\\
\subfloat[Distributed memory]{
    \label{fig:LOBPCG_time_dist}
    \setlength\figurewidth{\linewidth} 
    \setlength\figureheight{0.80\figurewidth} 
    \resizebox{.45\textwidth}{!}{{\scalefont{1}
\definecolor{mycolor1}{rgb}{0.576470588235294,0,0}%
\definecolor{mycolor2}{rgb}{0.541176470588235,0.168627450980392,0.886274509803922}%

\begin{tikzpicture}

\begin{axis}[%
width=\figurewidth,
height=\figureheight,
scale only axis,
xmin=10,
xmax=120,
xtick={ 20,  40,  60,  80, 100, 120},
xlabel={n},
xmajorgrids,
ymin=0,
ymax=70,
ytick={ 0, 10, 20, 30, 40, 50, 60, 70},
ylabel={cpu clock time (sec)},
ymajorgrids,
title style={align=center},
title={Timing of LOBPCG-SMG with and without balanced relaxation\\[1ex](16 processors each on 24 nodes)},
legend style={at={(0.03,0.97)},anchor=north west,draw=black,fill=white,legend cell align=left}
]
\addplot [
color=mycolor1,
solid,
line width=2.0pt,
mark=square,
mark options={solid}
]
table[row sep=crcr]{
10 0.59\\
20 1.22\\
30 1.35\\
40 7.03\\
50 7.54\\
60 16.86\\
70 25\\
80 38.62\\
90 44.85\\
100 47.93\\
110 55.55\\
120 63.07\\
};
\addlegendentry{LOBPCG-SMG with balanced relaxation};

\addplot [
color=mycolor2,
solid,
line width=2.0pt,
mark=x,
mark options={solid}
]
table[row sep=crcr]{
10 0.24\\
20 0.6\\
30 1.32\\
40 5.35\\
50 6.68\\
60 12.8\\
70 20.47\\
80 22.83\\
90 27.1\\
100 30.93\\
110 33.98\\
120 40.3\\
};
\addlegendentry{LOBPCG-SMG with no post-relaxation};

\end{axis}
\end{tikzpicture}%
}}
}
\caption{Cost for higher relaxation level for LOBPCG-SMG}
\label{fig:LOBPCG_time}
\end{figure}

\begin{figure}[ht]
\centering
\subfloat[Shared memory]{
    \label{fig:LOBPCG_itrs_smp}
    \setlength\figurewidth{\linewidth} 
    \setlength\figureheight{0.80\figurewidth} 
    \hspace{-15pt}
    \resizebox{.475\textwidth}{!}{{\scalefont{1}
\definecolor{mycolor1}{rgb}{0.576470588235294,0,0}%
\definecolor{mycolor2}{rgb}{0.541176470588235,0.168627450980392,0.886274509803922}%

\begin{tikzpicture}

\begin{axis}[%
width=\figurewidth,
height=\figureheight,
scale only axis,
xmin=10,
xmax=120,
xtick={ 20,  40,  60,  80, 100, 120},
xlabel={n},
xmajorgrids,
ymin=0,
ymax=15,
ytick={ 0,  1,  2,  3,  4,  5,  6,  7,  8,  9, 10, 11, 12, 13, 14, 15},
ylabel={\# iterations},
ymajorgrids,
title style={align=center},
title={Iteration count of LOBPCG-SMG with and without balanced relaxation\\[1ex](16 Processors on 1 node)},
legend style={at={(0.03,0.97)},anchor=north west,draw=black,fill=white,legend cell align=left}
]
\addplot [
color=mycolor1,
solid,
line width=2.0pt,
mark=square,
mark options={solid}
]
table[row sep=crcr]{
10 9\\
20 10\\
30 9\\
40 10\\
50 7\\
60 9\\
70 7\\
80 7\\
90 7\\
100 8\\
110 7\\
120 7\\
};
\addlegendentry{LOBPCG-SMG with balanced relaxation};

\addplot [
color=mycolor2,
solid,
line width=2.0pt,
mark=x,
mark options={solid}
]
table[row sep=crcr]{
10 11\\
20 12\\
30 12\\
40 9\\
50 10\\
60 9\\
70 9\\
80 11\\
90 9\\
100 9\\
110 10\\
120 8\\
};
\addlegendentry{LOBPCG-SMG with no post-relaxation};

\end{axis}
\end{tikzpicture}%
}}
}\\
\vspace{-2pt}
\subfloat[Distributed memory]{
    \label{fig:LOBPCG_itrs_dist}
    \setlength\figurewidth{\linewidth} 
    \setlength\figureheight{0.80\figurewidth} 
    \hspace{-15pt}
    \resizebox{.475\textwidth}{!}{{\scalefont{1}
\definecolor{mycolor1}{rgb}{0.576470588235294,0,0}%
\definecolor{mycolor2}{rgb}{0.541176470588235,0.168627450980392,0.886274509803922}%

\begin{tikzpicture}

\begin{axis}[%
width=\figurewidth,
height=\figureheight,
scale only axis,
xmin=10,
xmax=120,
xtick={ 20,  40,  60,  80, 100, 120},
xlabel={n},
xmajorgrids,
ymin=0,
ymax=10,
ytick={ 0,  1,  2,  3,  4,  5,  6,  7,  8,  9, 10},
ylabel={\# iterations},
ymajorgrids,
title style={align=center},
title={Iteration count of LOBPCG-SMG with and without balanced relaxation\\[1ex](16 processors each on 24 nodes)},
legend style={draw=black,fill=white,legend cell align=left}
]
\addplot [
color=mycolor1,
solid,
line width=2.0pt,
mark=square,
mark options={solid}
]
table[row sep=crcr]{
10 7\\
20 7\\
30 6\\
40 6\\
50 5\\
60 5\\
70 5\\
80 6\\
90 5\\
100 5\\
110 5\\
120 5\\
};
\addlegendentry{LOBPCG-SMG with balanced relaxation};

\addplot [
color=mycolor2,
solid,
line width=2.0pt,
mark=x,
mark options={solid}
]
table[row sep=crcr]{
10 8\\
20 9\\
30 8\\
40 7\\
50 7\\
60 7\\
70 7\\
80 7\\
90 7\\
100 7\\
110 7\\
120 7\\
};
\addlegendentry{LOBPCG-SMG with no post-relaxation};

\end{axis}
\end{tikzpicture}%
}}
}
\caption{Iteration comparison of relaxation levels for LOBPCG-SMG }
\label{fig:LOBPCG_itrs}
\end{figure}
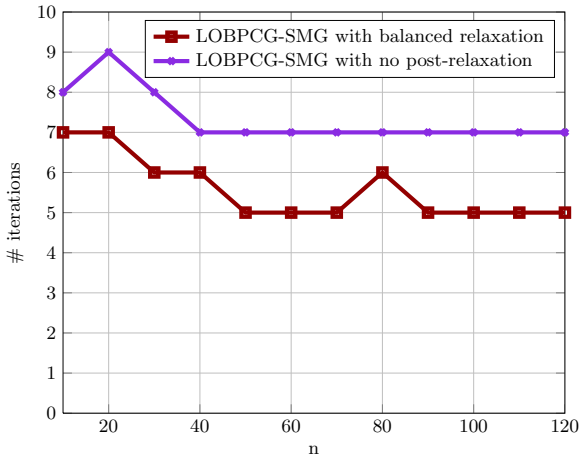

\section{Theoretical justification of\\ the observed numerical\\ behavior}
\label{sec:theory}
For linear systems with SPD coefficient matrices, the use of nonsymmetric 
preconditioning has been justified, e.g., in
\cite[Section 12.3]{MR1276069},
\cite{MR1934875,MR1797890}, and 
\cite[Section 10.2]{MR2427040}. 
The arguments used there are applicable for nonsymmetric and variable preconditioning. 

Specifically, it is shown in \cite[Section 10.2]{MR2427040} that the 
flexible  PCG, i.e.,\ using~(\ref{eqn:beta_alt}), is locally optimal, i.e.,\ 
on every step it converges not slower than PSD. The convergence rate bound for the 
PSD with nonsymmetric preconditioning established  in \cite[Section 10.2]{MR2427040} is
\begin{equation} 
\|r_{k+1}\|_{A^{-1}}\leq \delta \|r_{k}\|_{A^{-1}},
\label{eqn:Vb}
\end{equation}
under the assumption 
\begin{equation} 
\|I-AT\|_{A^{-1}}\leq \delta<1,
\label{eqn:Va}
\end{equation} 
where $\|\cdot\|_{A^{-1}}$ denotes the operator norm induced 
by the corresponding vector norm $\|x\|_{A^{-1}}=\sqrt{x\rq{}A^{-1}x}$.

The key identity for PSD, that easily leads to bound \eqref{eqn:Vb}, is presented in the following theorem.
\begin{theorem}\label{thm:Vi}
The identity holds,
\begin{equation} 
\|r_{k+1}\|_{A^{-1}} / \|r_{k}\|_{A^{-1}}=
\sin\left(\angle_{A^{-1}}\{r_{k}, AT r_{k}\}\right),
\label{eqn:Vi}
\end{equation} 
where the right-hand side is defined via
\[
\cos\left(\angle_{A^{-1}}\{r_{k}, AT r_{k}\}\right)=
\frac{\left|(r_{k})' T r_{k}\right|}{ \|r_{k}\|_{A^{-1}}\|ATr_{k}\|_{A^{-1}}}.
\]
\end{theorem}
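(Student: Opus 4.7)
The plan is to view equation~(\ref{eqn:Vi}) as a Pythagorean identity in the $A^{-1}$-inner product $(u,v)_{A^{-1}} = u'A^{-1}v$. Under this inner product, one has $(r_k, ATr_k)_{A^{-1}} = r_k' A^{-1} A T r_k = r_k' T r_k$, so the quoted formula for $\cos\bigl(\angle_{A^{-1}}\{r_k, ATr_k\}\bigr)$ is exactly the standard definition of the cosine of the angle between $r_k$ and $ATr_k$ in this inner product. The PSD step, by its very definition of $\alpha_k$, produces the orthogonal projection of $r_k$ onto the line spanned by $ATr_k$ in this inner product, so $r_{k+1}$ will be the orthogonal component, and its squared norm divided by $\|r_k\|_{A^{-1}}^2$ is $1 - \cos^2 = \sin^2$ of the angle.

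First I would specialize Algorithm~\ref{alg:CG} to PSD ($\beta_k=0$), giving $p_k = s_k = Tr_k$, $\alpha_k = (r_k' T r_k)/\bigl((Tr_k)' A T r_k\bigr)$, and hence the residual recurrence $r_{k+1} = r_k - \alpha_k A T r_k$. Next I would expand
\[
\|r_{k+1}\|_{A^{-1}}^2 = (r_k - \alpha_k A T r_k)' A^{-1} (r_k - \alpha_k A T r_k),
\]
and use the symmetry of $A$ to simplify the cross term $(ATr_k)' A^{-1} r_k = (Tr_k)' r_k = r_k' T r_k$ (the last equality holds because the quantity is a scalar, so equals its own transpose), and the quadratic term $(ATr_k)' A^{-1} (ATr_k) = (Tr_k)' A(Tr_k) = \|ATr_k\|_{A^{-1}}^2$.

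Substituting the value of $\alpha_k$ then collapses the middle and last terms into a single squared quotient, yielding
\[
\|r_{k+1}\|_{A^{-1}}^2 = \|r_k\|_{A^{-1}}^2 - \frac{(r_k' T r_k)^2}{\|ATr_k\|_{A^{-1}}^2}.
\]
Dividing through by $\|r_k\|_{A^{-1}}^2$ and recognizing the second term as $\cos^2\bigl(\angle_{A^{-1}}\{r_k, ATr_k\}\bigr)$ from the definition supplied in the theorem, I would conclude $\|r_{k+1}\|_{A^{-1}}^2/\|r_k\|_{A^{-1}}^2 = \sin^2(\cdot)$ and take the (nonnegative) square root.

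There is no real obstacle here; the one place requiring care is that $T$ is not assumed symmetric, so I must justify $(Tr_k)'r_k = r_k' T r_k$ via the scalar-transpose trick rather than by symmetry of $T$, and I must keep the absolute value on $r_k' T r_k$ when passing from the signed quantity appearing in $\alpha_k$ to the cosine, since the sign can be absorbed under the squaring. The identity itself is entirely algebraic; nothing about definiteness or positivity of $T$ is needed at this step, which is precisely why it extends to the nonsymmetric preconditioning regime considered in the paper.
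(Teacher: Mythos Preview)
Your proof is correct and complete; it is, however, a genuinely different route from the paper's. The paper does not carry out any computation: it simply observes that identity~\eqref{eqn:Vi} is implicitly established in the proof of \cite[Theorem~10.2]{MR2427040}, and alternatively that under the substitution $r_k = Ae_k$ it becomes $\|e_{k+1}\|_A/\|e_k\|_A = \sin\bigl(\angle_A\{e_k,TAe_k\}\bigr)$, which is \cite[Lemma~4.1]{knyazev1267}, noting that the SPD hypothesis on $T$ there is never actually invoked. Your argument instead gives a direct, self-contained Pythagorean computation in the $A^{-1}$-inner product, recognizing that the PSD choice of $\alpha_k$ is precisely the coefficient that makes $r_{k+1}$ the $A^{-1}$-orthogonal rejection of $r_k$ from the line through $ATr_k$. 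What you gain is transparency and independence from external references; what the paper's approach buys is the explicit link to the error-norm formulation~\eqref{eqn:KLi} and to the existing literature, which is useful for the paper's broader narrative about carrying over SPD-preconditioner results. Your careful handling of the nonsymmetric $T$ via the scalar-transpose identity $(Tr_k)'r_k = r_k'Tr_k$ is exactly the point the paper makes by remarking that the SPD assumption in \cite[Lemma~4.1]{knyazev1267} is unused.
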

\begin{proof}
Identity  \eqref{eqn:Vi} is actually proved, although not explicitly formulated, 
in the proof of \cite[Theorem 10.2]{MR2427040}. Alternatively, 
identity  \eqref{eqn:Vi} is equivalent to 
\begin{equation} 
\|e_{k+1}\|_{A} / \|e_{k}\|_{A}=
\sin\left(\angle_{A}\{e_{k}, TA e_{k}\}\right),
\label{eqn:KLi}
\end{equation}
where $A e_{k}=r_{k}$, which is the statement of \cite[Lemma 4.1]{knyazev1267}.
We note that  \cite{knyazev1267} generally assumes that the preconditioner $T$ is SPD, 
but this assumption is not actually used in the proof of  \cite[Lemma 4.1]{knyazev1267}.
\end{proof}

Assumption \eqref{eqn:Va} is very simple, but has one significant drawback---it does not 
allow arbitrary scaling of the preconditioner $T$, while the PCG and PSD methods are 
invariant with respect to scaling of $T.$ The way around it is to scale the  preconditioner $T$
\emph{before} assumption \eqref{eqn:Va} is verified.  We now illustrate such a scaling 
under an additional assumption that $T$ is SPD, following \cite{knyazev1267}. 
We start with a theorem, connecting assumption \eqref{eqn:Va} with its equivalent, 
and probably more  traditional form.
\begin{theorem}\label{thm:Vii}
Let the  preconditioner $T$ be SPD. Then assumption  \eqref{eqn:Va} is equivalent to
\begin{equation} 
\|I-TA\|_{T^{-1}}\leq \delta<1.
\label{eqn:KLa}
\end{equation} 
\end{theorem}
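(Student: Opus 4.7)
The plan is to reduce both norms to the standard Euclidean norm of a symmetric matrix by using the SPD square roots of $A$ and $T$, then exploit the similarity between $A^{1/2}TA^{1/2}$ and $T^{1/2}AT^{1/2}$.

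First I would rewrite the left-hand norm. Using the substitution $x = A^{1/2}u$ in the Rayleigh-type quotient defining $\|I-AT\|_{A^{-1}}$, the denominator $x'A^{-1}x$ becomes $u'u$, while $(I-AT)x = A^{1/2}(I - A^{1/2}TA^{1/2})u$ so that the numerator becomes $u'(I-A^{1/2}TA^{1/2})^2 u$, using that $A^{1/2}TA^{1/2}$ is symmetric. Hence
\begin{equation*}
\|I-AT\|_{A^{-1}} = \|I - A^{1/2}TA^{1/2}\|_2,
\end{equation*}
where $\|\cdot\|_2$ is the spectral norm.

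Next I would repeat the same calculation for the right-hand norm with the substitution $x = T^{1/2}v$, giving
\begin{equation*}
\|I-TA\|_{T^{-1}} = \|I - T^{1/2}AT^{1/2}\|_2.
\end{equation*}
Both matrices inside the spectral norms are symmetric, so each spectral norm equals the largest eigenvalue in absolute value.

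Finally I would observe that the two matrices $A^{1/2}TA^{1/2}$ and $T^{1/2}AT^{1/2}$ are of the form $BB'$ and $B'B$ respectively with $B = A^{1/2}T^{1/2}$, so they share the same spectrum. Therefore $I - A^{1/2}TA^{1/2}$ and $I - T^{1/2}AT^{1/2}$ share the same spectrum as well, which forces $\|I-AT\|_{A^{-1}} = \|I-TA\|_{T^{-1}}$. In particular, \eqref{eqn:Va} holds with constant $\delta$ if and only if \eqref{eqn:KLa} holds with the same $\delta$.

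The only mildly delicate step is verifying the algebraic identities in the change of variables, which rely on $A^{1/2}$ and $T^{1/2}$ being symmetric; this is routine. No real obstacle is anticipated.
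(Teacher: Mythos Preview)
Your proof is correct. Both your argument and the paper's ultimately rest on the fact that $AT$ and $TA$ share the same spectrum, but the route is genuinely different.

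The paper argues abstractly: since $T$ is SPD, the product $AT$ is self-adjoint with respect to the $A^{-1}$ inner product, so the operator norm $\|I-AT\|_{A^{-1}}$ equals the spectral radius of $I-AT$, i.e., condition~\eqref{eqn:Va} is equivalent to $\Lambda(AT)\subset[1-\delta,1+\delta]$. The symmetric statement holds for $TA$ in the $T^{-1}$ inner product, and then $\Lambda(AT)=\Lambda(TA)$ finishes the argument. Your proof instead works concretely with square roots: the substitutions $x=A^{1/2}u$ and $x=T^{1/2}v$ reduce both weighted operator norms to Euclidean spectral norms of the symmetric matrices $I-A^{1/2}TA^{1/2}$ and $I-T^{1/2}AT^{1/2}$, and the $BB'$ versus $B'B$ observation with $B=A^{1/2}T^{1/2}$ gives the spectral equality. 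The paper's approach is a bit more conceptual and avoids introducing square roots; yours is more explicit and yields the stronger conclusion $\|I-AT\|_{A^{-1}}=\|I-TA\|_{T^{-1}}$ directly, rather than just equivalence of the two inequalities. Either way the key step is the same spectral identity.
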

\begin{proof}
Since $T$ is SPD, on the one hand, the matrix product $AT$ is also SPD, but with respect to the $A^{-1}$ scalar product. 
This implies that assumption  \eqref{eqn:Va} is equivalent to the statement that 
$\Lambda(AT)\in[1-\delta,1+\delta]$ with $\delta<1,$
where $\Lambda(\cdot)$ denotes the matrix spectrum.
On the other hand,  the matrix product $TA$ is  SPD as well, with respect to the $T^{-1}$ scalar product. 
Thus,  assumption  \eqref{eqn:KLa} is equivalent to the statement that 
$\Lambda(TA)\in[1-\delta,1+\delta]$. 
This means the equivalence of assumptions  \eqref{eqn:Va} and  \eqref{eqn:KLa}, since 
 $\Lambda(AT)=\Lambda(TA)$.
\end{proof}

Let us now, without loss of generality, as in \cite[p. 96]{MR1942725} and 
\cite[pp. 1268--1269]{knyazev1267}, 
always scale the SPD preconditioner $T$ in such a way that 
\[
\max\{\Lambda(TA)\}+\min\{\Lambda(TA)\}=2.
\]
Then we have $\delta=(\kappa(TA)-1)/(\kappa(TA)+1)$  and, vice versa, 
$\kappa(TA)=(1+\delta)/(1-\delta)$, where $\kappa(\cdot)$ denotes the matrix spectral condition number. 
The convergence rate bound \eqref{eqn:Vb} for the PSD with nonsymmetric preconditioning in this case 
turns into the standard PSD  convergence rate bound for the case of SPD preconditioner $T$; 
see. e.g.,\  \cite[Bound (1.3)]{knyazev1267}. 
Moreover,  \cite[Theorem 5.1]{knyazev1267} shows that this convergence rate bound is sharp 
for PSD, and cannot be improved for flexible  PCG, i.e.,\ using~(\ref{eqn:beta_alt}),
if the SPD preconditioner $T$ changes on every iteration. The latter result  
naturally extends to the case of nonsymmetric preconditioning of \cite[Section 10.2]{MR2427040}. 

Compared to linear systems, eigenvalue problems are 
significantly more complex. Sharp convergence rate bounds 
for symmetric eigenvalue problems have been obtained in the last decade, 
and only for the simplest preconditioned method; 
see \cite{MR1942725,MR2530268} and references therein. 
A possibility of using nonsymmetric preconditioning 
for symmetric eigenvalue problems has not been considered before, 
to our knowledge.  However, our check of arguments of 
\cite{MR1942725} and preceding works, where a PSD convergence rate bound is proved  
assuming  \eqref{eqn:Va} and SPD preconditioning, reveals 
that the latter assumption, SPD, is actually never significantly used, 
and can be dropped without affecting the bound. 

The arguments above lead us to a surprising determination 
that whether or not the preconditioner is SPD is of no importance for PSD
convergence, given the same quality of preconditioning, measured by
\eqref{eqn:Va} after preconditioner prescaling.  If the preconditioner is fixed
SPD then the standard PCG is the method of choice.  The cases, where the
preconditioner is variable or nonsymmetric, are similar to each other---the
standard non-flexible  PCG, i.e.,\ using~(\ref{eqn:beta_orig}), stalls, while
the flexible PCG converges, due to its local optimality, but may not be much
faster compared to PSD.  This explains the numerical results using nonsymmetric
preconditioning reported  in this work, as related to results of
\cite{knyazev1267} for variable SPD preconditioning. 

\section{Conclusion} 
Although the flexible PCG linear solver does require a bit more computational
effort and storage as compared to the standard PCG, within the scope of
preconditioning the extra effort can be worthwhile, if the preconditioner is not
fixed SPD.  Moreover, our numerical tests showed that the behavior is similar in
both shared and distributed memory.  Thus our conclusion is that the effect of
acceleration is independent of the speed of the node interconnection.  The use
of geometric multigrid without post-relaxation is demonstrated to be
surprisingly efficient as a preconditioner for locally optimal iterative
methods, such as the flexible PCG for linear systems and LOBPCG for eigenvalue
problems.    

\section{Acknowledgment}
The authors would like to thank Rob Falgout, Van Henson, Panayot Vassilevski,
and other members of the \emph{hypre} team for their attention to our numerical
results reported here and the Center for Computational Mathematics University of
Colorado Denver for the use of the cluster.  This work is partially supported by
NSF awards CNS 0958354 and DMS 1115734.

\bibliographystyle{abbrv}
\bibliography{biblio}

%


\end{document}